\titleformat{\section}[block]
 {\bfseries}
 {\thesection.}
 {\fontdimen2\font}
 {}
\setlist{noitemsep}
\newtheorem{theorem}{Theorem}[section]
\newtheorem{corollary}[theorem]{Corollary}
\newtheorem{proposition}[theorem]{Proposition}
\DeclareMathOperator{\uhr}{\upharpoonright}
\DeclareMathOperator\sto{\leadsto}
\renewcommand{\emptyset}{\varnothing}
\numberwithin{equation}{section}
\begin{document}

\author{Valentin Gutev}

\address{Institute of Mathematics and Informatics, Bulgarian Academy
  of Sciences, Acad. G. Bonchev Street, Block 8, 1113 Sofia, Bulgaria}

\email{\href{mailto:gutev@math.bas.bg}{gutev@math.bas.bg}}

\subjclass[2010]{54C10, 54C20, 54C60, 54C65, 54D35, 54G05}

\keywords{Set-valued mapping, upper semi-continuous, continuous
   selection, extremally disconnected space}

\title{Two Selection Theorems for Extremally Disconnected Spaces}

\begin{abstract}
  The paper contains a very simple proof of the classical Hasumi's
  theorem that each usco mapping defined on an extremally disconnected
  space has a continuous selection. The paper also contains a very
  simple proof of a recent result about extension of densely defined
  continuous selections for compact-valued continuous mappings, in
  fact a generalisation of this result to all usco mappings with a
  regular range.
\end{abstract}

\dedicatory{Dedicated to Professor Georgi Dimov on the occasion of his
  75th birthday}

\date{\today}
\maketitle

\section{Introduction}

All spaces in this paper are Hausdorff topological spaces. For spaces
(sets) $X$ and $Y$, we write $\varphi:X\sto Y$ to designate that
$\varphi$ is a map from $X$ to the \emph{nonempty} subsets of $Y$. In
Michael's selection theory, such a map is commonly called a
\emph{set-valued mapping} (also a \emph{multifunction}, or simply a
\emph{carrier} \cite{michael:56a}). In this regard, let us recall that
a usual map $f:X\to Y$ is a \emph{selection} (or a \emph{single-valued
  selection}) for $\varphi:X\sto Y$ if $f(x)\in\varphi(x)$ for every
$x\in X$.\medskip

For spaces $X$ and $Y$, it is customary to say that a mapping
$\varphi:X\sto Y$ is \emph{compact-valued} (\emph{singleton-valued},
etc.) if every set $\varphi(x)\subset Y$, $x\in X$, is compact
(respectively, singleton, etc.). Evidently, each singleton-valued
mapping $\varphi:X\sto Y$ is identical to a single-valued map from $X$
to $Y$. A mapping $\varphi:X\sto Y$ is \emph{upper semi-continuous},
or \emph{u.s.c.}, if the preimage
\[
  \varphi^{-1}[F]=\{x\in X: \varphi(x)\cap F\neq \emptyset\}
\]
is closed in $X$ for every closed $F\subset Y$; equivalently, it is
u.s.c.\ if the set
\[
\varphi^{\#}[U]=X\setminus \varphi^{-1}[Y\setminus U]=\{x\in X:
\varphi(x)\subset U\}
\]
is open in $X$ for every open $U\subset Y$. For convenience, we say
that $\varphi:X\sto Y$ is \emph{usco} if it is u.s.c.\ and
{compact-valued}. Finally, let us recall that $\varphi:X\sto Y$ is
\emph{lower semi-continuous}, or \emph{l.s.c.}, $\varphi^{-1}[U]$ is
open in $X$ for every open $U\subset Y$, and we say that $\varphi$ is
\emph{continuous} if it is both l.s.c.\ and u.s.c.\medskip

A continuous map $f:X\to Y$ is \emph{perfect} if it is closed and each
$f^{-1}(y)$, $y\in Y$, is a compact subset of $X$. Similarly, we will
say that an usco mapping $\varphi:X\sto Y$ is \emph{perfect} if
$\varphi[S]=\bigcup_{x\in S}\varphi(x)$ is closed in $Y$ for every
closed $S\subset X$, and each $\varphi^{-1}(y)=\varphi^{-1}[\{y\}]$,
$y\in Y$, is compact. Evidently, perfect singleton-valued mappings are
identical to perfect single-valued maps.  Moreover, it follows from
\cite[Proposition 1.1]{choban:70a} that $\varphi:X\sto Y$ is perfect
if and only if the projections $\pi_X:\Gamma(\varphi)\to X$ and
$\pi_Y:\Gamma(\varphi)\to Y$ from the graph
$\Gamma(\varphi)\subset X\times Y$ of $\varphi$ are perfect maps:
\begin{center}
\begin{tikzcd}[column sep=small]
  & \Gamma(\varphi) \arrow{dl}[swap]{\pi_X}  \arrow[dr, "\pi_Y"] & \\
  X \arrow[rightsquigarrow, "\varphi"]{rr} & & Y
\end{tikzcd}
\end{center}

Finally, let us recall that a space $X$ is \emph{extremally
  disconnected} if $\overline{U}$ is open for every open $U\subset
X$. The following interesting selection theorem was obtained by Hasumi
in \cite[Theorem 1.1]{MR0248773}.

\begin{theorem}
  \label{theorem-min-usco-v3:1}
  Let $X$ be an extremally disconnected space, $Y$ be a regular space
  and $\varphi:X\sto Y$ be an usco mapping. Then $\varphi$ has a
  continuous selection. If moreover $\varphi$ is also perfect, then it
  has a perfect selection.
\end{theorem}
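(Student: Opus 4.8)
The plan is to produce a continuous selection by extracting a \emph{minimal} usco submapping of $\varphi$ and showing it must be singleton-valued; its single values then give the desired map. First I would order the usco mappings $\psi\colon X\sto Y$ with $\psi(x)\subseteq\varphi(x)$ for all $x\in X$ by graph-inclusion and apply Zorn's lemma. For a chain $\{\psi_\alpha\}$ the pointwise intersection $\psi_*(x)=\bigcap_\alpha\psi_\alpha(x)$ has nonempty compact values (nested compacta), and for closed $F\subseteq Y$ compactness yields $\psi_*^{-1}[F]=\bigcap_\alpha\psi_\alpha^{-1}[F]$, which is closed; hence $\psi_*$ is again usco and a lower bound for the chain. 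Thus a minimal usco $\psi\subseteq\varphi$ exists.

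The heart of the matter is the claim that such a minimal $\psi$ is singleton-valued. Suppose instead that $y_1\neq y_2$ both lie in some $\psi(x_0)$. Using regularity of $Y$ I would choose open sets $V_1\ni y_1$ and $V_2\ni y_2$ with $\overline{V_1}\cap\overline{V_2}=\emptyset$. The key intermediate step is that $\psi^{-1}[W]\subseteq\overline{\psi^{\#}[W]}$ for every open $W\subseteq Y$. Granting this, $x_0\in\psi^{-1}[V_i]\subseteq\overline{\psi^{\#}[V_i]}$ for $i=1,2$. But $\psi^{\#}[V_1]$ and $\psi^{\#}[V_2]$ are disjoint open sets (a common point would force $\psi(x)\subseteq V_1\cap V_2=\emptyset$), and in an extremally disconnected space disjoint open sets have disjoint closures, the closure of an open set being clopen. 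This contradicts $x_0\in\overline{\psi^{\#}[V_1]}\cap\overline{\psi^{\#}[V_2]}$, so no such $x_0$ exists.

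It remains to prove the intermediate inclusion, and this is where I expect the only genuine difficulty — and precisely where extremal disconnectedness does the decisive work. If some $x_1\in\psi^{-1}[W]$ lay outside the clopen set $A=\overline{\psi^{\#}[W]}$, then on the clopen complement $X\setminus A$ every value satisfies $\psi(x)\not\subseteq W$, that is $\psi(x)\cap(Y\setminus W)\neq\emptyset$. I would then define $\psi'$ to equal $\psi(x)\cap(Y\setminus W)$ on $X\setminus A$ and $\psi(x)$ on $A$. Because $X$ is extremally disconnected, $X=A\sqcup(X\setminus A)$ is a partition into \emph{clopen} pieces, so gluing the two (clearly usco) restrictions produces a genuine usco mapping; this is exactly the point at which the usual boundary obstruction to such a ``cutting'' construction vanishes. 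Since $\psi(x_1)\cap W\neq\emptyset$, the value $\psi'(x_1)$ is a proper nonempty compact subset of $\psi(x_1)$, so $\psi'\subsetneq\psi$ is a strictly smaller usco submapping, contradicting minimality. This establishes the inclusion and completes the construction of a singleton-valued $\psi$, whose associated map $f$ satisfies $f^{-1}(W)=\psi^{\#}[W]$ and is therefore a continuous selection of $\varphi$.

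For the perfect case, I would observe that any continuous selection $f$ of a perfect $\varphi$ is automatically perfect, so the same map works. Indeed the graph $\Gamma(f)$ is closed in $X\times Y$ (as $f$ is continuous and $Y$ is Hausdorff) and contained in $\Gamma(\varphi)$, hence closed there; by the cited characterisation the projection $\pi_Y\colon\Gamma(\varphi)\to Y$ is perfect, and the restriction of a perfect map to a closed subset is perfect, so $\pi_Y|_{\Gamma(f)}$ is perfect. Composing with the homeomorphism $x\mapsto(x,f(x))$ of $X$ onto $\Gamma(f)$ exhibits $f$ itself as perfect.
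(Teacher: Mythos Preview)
Your proof is correct and follows essentially the paper's route: Zorn produces a minimal usco $\psi\subseteq\varphi$, and a clopen ``cutting'' construction exploiting extremal disconnectedness forces $\psi$ to be singleton-valued. Two minor differences are worth noting. First, the paper cuts on \emph{both} clopen pieces---taking $\psi(x)\cap\overline{U}$ on $G=\overline{\psi^{\#}[U]}$ and $\psi(x)\setminus U$ on $X\setminus G$---to obtain the sharper inclusion $\psi^{-1}[U]\subset\psi^{\#}[\overline{U}]$; this immediately gives singleton-valuedness from Hausdorffness of $Y$ alone (if $y_1\ne y_2$ lie in $\psi(p)$, any Hausdorff neighbourhood $U\ni y_1$ missing $y_2$ already yields $y_2\notin\overline{U}\supset\psi(p)$). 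Your weaker inclusion $\psi^{-1}[W]\subset\overline{\psi^{\#}[W]}$ requires the extra ``disjoint open sets have disjoint closures'' step, and your appeal to regularity of $Y$ is in fact superfluous: you only use $V_1\cap V_2=\emptyset$, which Hausdorffness already provides. Second, for the perfect case the paper simply remarks that a minimal usco contained in a perfect mapping is itself perfect, whereas your graph argument shows more---that \emph{any} continuous selection of a perfect $\varphi$ is perfect---which is a clean and slightly stronger observation.
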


Recently, the following related result was obtained in \cite[Theorem
1]{Pimienta}.

\begin{theorem}
  \label{theorem-min-usco-v4:1}
  Let $X$ be an extremally disconnected regular space, $Y$ be a
  metrizable space and $\varphi:X\sto Y$ be a compact-valued
  continuous mapping. Then for each dense subset $A\subset X$, each
  continuous selection for $\varphi\uhr A$ can be extended to a
  continuous selection for~$\varphi$.
\end{theorem}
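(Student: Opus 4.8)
The plan is to reduce the statement to Hasumi's Theorem~\ref{theorem-min-usco-v3:1} by replacing $\varphi$ with a \emph{smaller} usco mapping that already forces any selection to extend the given one. Fix a continuous selection $g$ for $\varphi\uhr A$ and let $\psi:X\sto Y$ be the set-valued mapping whose graph is the closure of the graph of $g$; concretely, $\psi(x)=\{y\in Y:(x,y)\in\overline{\Gamma(g\uhr A)}\}$ for $x\in X$. The whole argument then reduces to checking that $\psi$ is an usco mapping with $\psi(x)\subseteq\varphi(x)$ for all $x\in X$ and $\psi(a)=\{g(a)\}$ for all $a\in A$. Granting this, Theorem~\ref{theorem-min-usco-v3:1} applies (since $X$ is extremally disconnected and $Y$, being metrizable, is regular) and produces a continuous selection $f$ for $\psi$; then $f(x)\in\psi(x)\subseteq\varphi(x)$ shows that $f$ selects $\varphi$, while $f(a)\in\psi(a)=\{g(a)\}$ shows that $f\uhr A=g$, so $f$ is the desired extension.

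First I would record that $\Gamma(\varphi)$ is closed in $X\times Y$: this is the standard fact that an u.s.c.\ compact-valued mapping into the regular space $Y$ has closed graph, proved by separating a point $y\notin\varphi(x)$ from the compact set $\varphi(x)$ and pulling back the resulting neighbourhood of $\varphi(x)$ through $\varphi^{\#}$. Since $(a,g(a))\in\Gamma(\varphi)$ for every $a\in A$, closedness gives $\overline{\Gamma(g\uhr A)}\subseteq\Gamma(\varphi)$, i.e.\ $\psi(x)\subseteq\varphi(x)$ for all $x$; in particular each $\psi(x)$ is a closed subset of the compact set $\varphi(x)$, hence compact. For $a\in A$ the equality $\psi(a)=\{g(a)\}$ is immediate from continuity of $g$ and the Hausdorff property: any net $(a_\alpha,g(a_\alpha))\to(a,y)$ with $a_\alpha\in A$ forces $a_\alpha\to a$ in $A$, hence $g(a_\alpha)\to g(a)$, so $y=g(a)$.

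The two genuinely substantive points are that $\psi$ has nonempty values and that it is u.s.c. For nonemptiness I would fix $x$, choose $a_U\in A\cap U$ for each open $U\ni x$ (possible by density of $A$), and show that the net $(g(a_U))_U$, directed by reverse inclusion of the $U$, clusters at some point of $\varphi(x)$: upper semi-continuity of $\varphi$ makes this net eventually contained in every open set containing the compact set $\varphi(x)$, so a finite subcover argument rules out the failure of clustering, and any cluster point lies in $\psi(x)$ by construction. For upper semi-continuity I would prove the general lemma that a closed-graph mapping $\psi$ dominated by an usco mapping $\varphi$ (i.e.\ $\psi(x)\subseteq\varphi(x)$) is itself u.s.c.: given open $W\supseteq\psi(x)$, use closedness of $\Gamma(\psi)$ to find, for each $y$ in the compact set $\varphi(x)\setminus W$, a basic open box $O_y\times V_y$ about $(x,y)$ missing $\Gamma(\psi)$, extract a finite subcover to obtain $V\supseteq\varphi(x)\setminus W$ and a neighbourhood $O_1$ of $x$ with $\psi[O_1]\cap V=\emptyset$, and finally intersect with a neighbourhood $O_2$ on which $\varphi(x')\subseteq W\cup V$ (from usc of $\varphi$) to conclude $\psi[O_1\cap O_2]\subseteq W$.

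I expect the u.s.c.\ verification to be the main obstacle, since it is the only place where one must combine the closed graph of $\psi$ with the upper semi-continuity of the dominating mapping $\varphi$ in a nontrivial way; the nonemptiness argument, by contrast, is a routine cluster-point computation, and the remaining identifications are formal. It is worth noting that neither lower semi-continuity of $\varphi$ nor metrizability of $Y$ is used beyond guaranteeing that $Y$ is regular, so the same argument yields the stronger statement advertised in the abstract: for $X$ extremally disconnected, $Y$ regular and $\varphi$ merely usco, every continuous selection of $\varphi\uhr A$ over a dense set $A$ extends to a continuous selection of $\varphi$.
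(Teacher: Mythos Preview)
Your argument is correct, and it follows a genuinely different path from the paper's. The paper proves the result (in fact its generalisation, Proposition~\ref{proposition-min-usco-v16:1}) by exploiting the \v{C}ech-Stone compactification: since $X$ is extremally disconnected and regular, $\beta A=\beta X$ for every dense $A\subset X$ (Proposition~\ref{proposition-min-usco-v14:1}), so $g$ extends to $\beta g:\beta X\to\beta Y$, and $f=\beta g\uhr X$ is shown to be a selection for $\varphi$ via the graph inclusion $\Gamma(f)=\overline{\Gamma(g)}\subset\Gamma(\varphi)$. The passage from completely regular to merely regular $Y$ then requires a separate step through absolutes (Corollary~\ref{corollary-min-usco-v16:1}). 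Your route instead stays inside $X\times Y$: you form the closure-of-graph mapping $\psi$, verify directly that it is usco and dominated by $\varphi$, and invoke Hasumi's theorem once. This avoids compactifications and absolutes altogether and reaches regular $Y$ in a single stroke; it also makes no use of the regularity of $X$. The paper's approach, on the other hand, is shorter once $\beta A=\beta X$ is in hand and implicitly yields more: since $f=\beta g\uhr X$ is single-valued and $\Gamma(f)=\overline{\Gamma(g)}$, the mapping you call $\psi$ is in fact already singleton-valued, so the appeal to Hasumi in your argument is, a posteriori, unnecessary.
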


The proof of Theorem \ref{theorem-min-usco-v3:1} in \cite{MR0248773}
is extended on several pages and is technically demanding. A simple
proof of this theorem for an arbitrary range $Y$ was given by Shapiro
in \cite[Corollary 1]{Shapiro1976}. However, the proof in
\cite{Shapiro1976} is not direct being based on the existence of an
extremally disconnected space $E$ and a perfect onto map $\pi:E\to
Y$. The proof of Theorem \ref{theorem-min-usco-v4:1} is also
technically demanding being based on convergence of nets.  In this
paper, we will give a very simple direct proof of Theorem
\ref{theorem-min-usco-v3:1}, see Propositions
\ref{proposition-min-usco-v9:1} and
\ref{proposition-min-usco-v10:1}. Section~\ref{sec:minim-relat-extr}
also contains some applications and related results. The last Section
\ref{sec:extens-select-extr} contains a very simple proof that Theorem
\ref{theorem-min-usco-v4:1} is valid for any usco mapping with a
completely regular range, see Proposition
\ref{proposition-min-usco-v16:1}. Using projective spaces, we will
also show that this theorem is valid for any regular range $Y$ as
well.

\section{Minimal Mappings and Extremal Disconnectedness}
\label{sec:minim-relat-extr}

The considerations in this section are based on the following very
simple observation which is behind the selection property in Theorem
\ref{theorem-min-usco-v3:1}. 

\begin{proposition}
  \label{proposition-min-usco-v9:1}
  Let $\psi:X\sto Y$ be a mapping such
  that 
  \begin{equation}
    \label{eq:min-usco-v9:1}
    \psi^{-1}[U]\subset 
    \psi^\#\left[\overline{U} \right]\quad \text{for every open $U\subset
      Y$.}
  \end{equation}
  Then $\psi$ is singleton-valued, i.e.\ a usual map from $X$ to $Y$.
\end{proposition}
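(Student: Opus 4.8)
The plan is to argue by contradiction, using nothing beyond the Hausdorff separation of $Y$, which is in force by the standing convention that all spaces are Hausdorff. It is worth first restating what hypothesis \eqref{eq:min-usco-v9:1} asserts pointwise: whenever $\psi(x)$ meets an open set $U$, the \emph{entire} set $\psi(x)$ must lie in $\overline{U}$. I expect this single requirement, applied to a well-chosen $U$, to be enough to collapse each value $\psi(x)$ to a point.

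First I would suppose, toward a contradiction, that for some $x\in X$ the value $\psi(x)$ contains two distinct points $y_1\neq y_2$. Since $Y$ is Hausdorff, I can choose disjoint open sets $U$ and $V$ with $y_1\in U$, $y_2\in V$, and $U\cap V=\emptyset$. The key observation—and really the only step carrying any content—is that then $y_2\notin\overline{U}$: indeed $V$ is an open neighbourhood of $y_2$ that misses $U$, so $y_2$ is not in the closure of $U$. This is precisely where Hausdorffness enters, and it is what makes the inclusion \eqref{eq:min-usco-v9:1} bite.

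Finally I would apply \eqref{eq:min-usco-v9:1} to the open set $U$. Because $y_1\in\psi(x)\cap U$, we have $\psi(x)\cap U\neq\emptyset$, i.e.\ $x\in\psi^{-1}[U]$; the hypothesis then forces $x\in\psi^\#\!\left[\overline{U}\right]$, that is $\psi(x)\subset\overline{U}$. But $y_2\in\psi(x)$ while $y_2\notin\overline{U}$, a contradiction. Hence every $\psi(x)$ has at most one point, and since $\psi(x)\neq\emptyset$ by the very definition of a mapping $X\sto Y$, each $\psi(x)$ is a singleton, so $\psi$ is an ordinary single-valued map. There is no genuine obstacle here; the only point to get right is the \emph{direction} of the separation—namely that it is the neighbourhood $V$ of $y_2$ (not one of $y_1$) that certifies $y_2\notin\overline{U}$, so that testing \eqref{eq:min-usco-v9:1} against $U$ produces the desired clash.
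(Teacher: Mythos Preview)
Your argument is correct and is essentially the same as the paper's: both show that whenever $\psi(p)$ meets an open set $U$ the hypothesis forces $\psi(p)\subset\overline{U}$, and then use Hausdorffness of $Y$ to conclude that $\psi(p)$ is a singleton. The only difference is cosmetic---you spell out the Hausdorff step by contradiction with explicit $y_1,y_2,U,V$, whereas the paper compresses it into the single remark ``Since $Y$ is Hausdorff, this implies that $\psi(p)$ is a singleton.''
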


\begin{proof}
  Take a point $p\in X$ and an open set $U\subset Y$ with $\psi(p)\cap
  U\neq \emptyset$. Then $p\in \psi^{-1}[U]$ and it follows from
  \eqref{eq:min-usco-v9:1} that $\psi(p)\subset\overline{U}$. Since 
  $Y$ is Hausdorff, this implies that $\psi(p)$ is a singleton. 
\end{proof}

The condition in \eqref{eq:min-usco-v9:1} is naturally related to
minimal usco mappings. Let us recall that an usco (perfect) mapping
$\psi:X\sto Y$ is \emph{minimal} if $\psi=\eta$ for any usco (perfect)
mapping $\eta:X\sto Y$ whose graph $\Gamma(\eta)$ is contained in the
graph $\Gamma(\psi)$ of $\psi$.  It is a well-known folklore result
that if $\psi:X\sto Y$ is a minimal usco mapping and $U\subset Y$ is
an open set, then $\overline{\psi^{-1}[U]}=\overline{\psi^\#[U]}$ and
$\psi(p)\subset \overline{U}$ for each interior point
$p\in \overline{\psi^\#[U]}$, see e.g.\ \cite[Lemma
2]{Valov1987}. Evidently, in case of an extremally disconnected space
$X$, we have the following relaxed form of this property.

\begin{proposition}
  \label{proposition-min-usco-v10:1}
  If $X$ is an extremally disconnected space and $\psi:X\sto Y$ is a
  minimal usco (perfect) mapping, then 
  \begin{equation}
    \label{eq:min-usco-v2:1}
    \overline{\psi^{-1}[U]}=\overline{\psi^\#[U]}\subset 
    \psi^\#\left[\overline{U}\right]\quad \text{for every open $U\subset
      Y$.}
  \end{equation}
  In particular, $\psi$ satisfies \eqref{eq:min-usco-v9:1} and is
  therefore singleton-valued.
\end{proposition}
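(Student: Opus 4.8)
The plan is to prove the chain of relations in \eqref{eq:min-usco-v2:1} for a minimal usco mapping $\psi:X\sto Y$ on an extremally disconnected space $X$, and then invoke Proposition \ref{proposition-min-usco-v9:1} to conclude that $\psi$ is singleton-valued. Fix an open set $U\subset Y$. The first step is the easy inclusion $\psi^\#[U]\subset\psi^{-1}[U]$, valid whenever the values $\psi(x)$ are nonempty: if $\psi(x)\subset U$ then certainly $\psi(x)\cap U=\psi(x)\neq\emptyset$. Taking closures gives $\overline{\psi^\#[U]}\subset\overline{\psi^{-1}[U]}$. For the reverse inclusion I would quote the cited folklore fact (\cite[Lemma 2]{Valov1987}) that for a minimal usco mapping one has $\overline{\psi^{-1}[U]}=\overline{\psi^\#[U]}$; this is the standard minimality property and is legitimately assumed. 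This settles the equality $\overline{\psi^{-1}[U]}=\overline{\psi^\#[U]}$ without yet using extremal disconnectedness.

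The heart of the argument is the final inclusion $\overline{\psi^\#[U]}\subset\psi^\#[\overline{U}]$, and this is where extremal disconnectedness enters. Since $\psi$ is u.s.c., the set $\psi^\#[U]$ is open in $X$; because $X$ is extremally disconnected, its closure $\overline{\psi^\#[U]}$ is therefore also open. Consequently \emph{every} point of $\overline{\psi^\#[U]}$ is an interior point of it. I would then apply the second half of the folklore property in \cite[Lemma 2]{Valov1987}, namely that $\psi(p)\subset\overline{U}$ for each interior point $p\in\overline{\psi^\#[U]}$. In the general (not extremally disconnected) setting this conclusion is only available at interior points, which is exactly why the inclusion into $\psi^\#[\overline{U}]$ fails to hold on the whole closure; the point of extremal disconnectedness is precisely to make the closure open so that the interior-point hypothesis is met everywhere. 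Thus for every $p\in\overline{\psi^\#[U]}$ we get $\psi(p)\subset\overline{U}$, which is the statement $p\in\psi^\#[\overline{U}]$, giving $\overline{\psi^\#[U]}\subset\psi^\#[\overline{U}]$.

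Combining the three relations yields \eqref{eq:min-usco-v2:1}. For the final assertion, chasing the displayed inclusions gives in particular $\psi^{-1}[U]\subset\overline{\psi^{-1}[U]}=\overline{\psi^\#[U]}\subset\psi^\#[\overline{U}]$ for every open $U\subset Y$, which is exactly the hypothesis \eqref{eq:min-usco-v9:1} of Proposition \ref{proposition-min-usco-v9:1}. That proposition then immediately delivers that $\psi$ is singleton-valued.

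I expect the main (indeed the only) conceptual obstacle to be the precise role of extremal disconnectedness in upgrading the interior-point conclusion to a conclusion valid on all of $\overline{\psi^\#[U]}$; once one observes that $\psi^\#[U]$ is open and hence, by extremal disconnectedness, has open closure, every point qualifies as an interior point and the folklore lemma applies verbatim. The remaining steps are the routine inclusion $\psi^\#[U]\subset\psi^{-1}[U]$ and a direct citation of \cite[Lemma 2]{Valov1987}, so no delicate estimate or construction is needed beyond this single observation.
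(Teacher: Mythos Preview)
Your argument is correct for the minimal usco case, and it is precisely the ``evident'' derivation the paper's lead-in paragraph signals: quote \cite[Lemma~2]{Valov1987} and note that extremal disconnectedness makes $\overline{\psi^\#[U]}$ open, so the interior-point hypothesis is vacuously satisfied. The paper's actual proof takes a different, self-contained route: it sets $G=\overline{\psi^\#[U]}$ and $H=X\setminus G$ (a clopen partition, by extremal disconnectedness), defines $\eta$ piecewise by $\eta(x)=\psi(x)\cap\overline{U}$ on $G$ and $\eta(x)=\psi(x)\setminus U$ on $H$, checks that $\eta$ is usco with $\Gamma(\eta)\subset\Gamma(\psi)$, and invokes minimality to get $\psi=\eta$, from which \eqref{eq:min-usco-v2:1} follows directly. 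The payoff of the paper's construction is twofold: it avoids the external citation, and it handles the parenthetical \emph{perfect} case in the same breath, since the constructed $\eta$ is visibly perfect whenever $\psi$ is. Your proposal, by contrast, cites the folklore lemma only in its minimal-usco formulation; to cover the minimal-perfect case you would need the additional observation (made just after the proposition in the paper) that any usco mapping whose graph sits inside the graph of a perfect mapping is itself perfect, so that a minimal perfect mapping is automatically minimal usco and Valov's lemma applies. This is a small but genuine omission in your write-up.
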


\begin{proof}
  Here is a brief outline of the proof. Take an open set $U\subset Y$
  and consider the clopen sets $G=\overline{\psi^\#[U]}$ and
  $H=X\setminus G$. Since
  $G\subset \psi^{-1}\left[\overline{U}\right]$ and
  $H\subset \psi^{-1}[Y\setminus U]$, we can define an usco mapping
  $\eta:X\sto Y$ by $\eta(x)=\psi(x)\cap\overline{U}$ if $x\in G$ and
  $\eta(x)=\psi(x)\setminus U$ if $x\in H$. It is also evident that
  $\eta$ is perfect whenever so is $\psi$.  Accordingly, $\psi=\eta$
  because $\Gamma(\eta)\subset \Gamma(\psi)$. Thus,
  \eqref{eq:min-usco-v2:1}~holds.
\end{proof}

It is well known and easily follows from the Kuratowski-Zorn lemma
that every usco mapping contains a minimal usco one. The existence of
minimal perfect mappings was shown in \cite[Lemma 2.1]{MR0248773}. In
fact, as remarked in the Introduction, a mapping $\varphi:X\sto Y$ is
perfect precisely when the projections ${\pi_X:\Gamma(\varphi)\to X}$
and $\pi_Y:\Gamma(\varphi)\to Y$ are perfect maps. Hence, if
$\varphi:X\sto Y$ is a perfect mapping and ${\psi:X\sto Y}$ is a
minimal usco mapping with ${\Gamma(\psi)\subset \Gamma(\varphi)}$,
then $\psi$ is itself perfect.  Thus, since each singleton-valued
u.s.c.\ mapping is identical to a continuous single-valued map,
Proposition \ref{proposition-min-usco-v10:1} immediately implies
Shapiro's generalisation \cite[Corollary 1]{Shapiro1976} of Hasumi's
Theorem~\ref{theorem-min-usco-v3:1}.

\begin{corollary}
  \label{corollary-min-usco-v10:1}
  Let $X$ be an extremally disconnected space, $Y$ be a space and
  $\varphi:X\sto Y$ be an usco mapping. Then $\varphi$ has a
  continuous selection. If moreover $\varphi$ is also perfect, then it
  has a perfect selection.
\end{corollary}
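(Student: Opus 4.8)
The plan is to obtain the selection as a minimal usco submapping of $\varphi$ and then feed it into Proposition \ref{proposition-min-usco-v10:1}. First I would apply the Kuratowski--Zorn lemma to the collection of all usco mappings $\eta:X\sto Y$ with $\Gamma(\eta)\subset\Gamma(\varphi)$, partially ordered by inclusion of graphs. The only point to verify is that every chain in this collection admits a lower bound, namely the mapping $\eta_0$ with $\Gamma(\eta_0)=\bigcap_i\Gamma(\eta_i)$; one checks that $\eta_0$ is again usco, its fibres being nonempty by the finite intersection property for the nested compact sets $\eta_i(x)$, and upper semi-continuity being inherited because each $\Gamma(\eta_i)$ is closed. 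Zorn's lemma then yields a minimal usco $\psi$ with $\Gamma(\psi)\subset\Gamma(\varphi)$.

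Since $X$ is extremally disconnected, Proposition \ref{proposition-min-usco-v10:1} applies to $\psi$ and shows that $\psi$ is singleton-valued. As recalled in the text, a singleton-valued u.s.c.\ mapping is nothing but a continuous single-valued map; moreover $\Gamma(\psi)\subset\Gamma(\varphi)$ forces $\psi(x)\in\varphi(x)$ for every $x\in X$. Hence $\psi$ is a continuous selection for $\varphi$, which settles the first assertion.

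For the second assertion I would show that this same $\psi$ is automatically perfect once $\varphi$ is. Here I would lean on the projection characterisation stated in the Introduction: perfectness of $\varphi$ is equivalent to $\pi_X:\Gamma(\varphi)\to X$ and $\pi_Y:\Gamma(\varphi)\to Y$ being perfect maps. Because the range is Hausdorff, the graph $\Gamma(\psi)$ is closed in $X\times Y$, hence closed in $\Gamma(\varphi)$, and the restriction of a perfect map to a closed subspace is again perfect. Thus $\pi_X\uhr\Gamma(\psi)$ and $\pi_Y\uhr\Gamma(\psi)$ are perfect, so $\psi$ is a perfect mapping by the same characterisation. Being singleton-valued and perfect, $\psi$ is then a perfect single-valued map, i.e.\ a perfect selection for $\varphi$.

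I expect the genuine content to sit entirely in the perfectness-inheritance step rather than in the continuous case. Once the minimal usco submapping is in hand, the continuous selection drops out of Proposition \ref{proposition-min-usco-v10:1} with no further work; by contrast, the perfect case requires the reformulation of perfectness through the two projections together with the stability of perfect maps under restriction to closed subspaces. If one preferred to avoid the projection language, the alternative would be to invoke the existence of minimal \emph{perfect} mappings directly, as in the cited \cite[Lemma 2.1]{MR0248773}, but the projection route keeps everything within the single notion of a minimal usco mapping.
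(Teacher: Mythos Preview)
Your proposal is correct and follows essentially the same route as the paper: take a minimal usco submapping via Zorn's lemma, apply Proposition~\ref{proposition-min-usco-v10:1} to see it is singleton-valued (hence a continuous selection), and for the perfect case use the projection characterisation together with the fact that $\Gamma(\psi)$ is closed in $\Gamma(\varphi)$ to conclude $\psi$ inherits perfectness. The paper's discussion preceding the corollary is more terse but records exactly this argument, including the remark that one could alternatively cite \cite[Lemma~2.1]{MR0248773} for minimal perfect mappings.
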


In the category of Hausdorff spaces and perfect maps, a space $E$ is
called \emph{projective} if for any perfect onto map $g:Y\to X$, every
perfect map $\pi:E\to X$ lifts to a perfect map $f:E\to Y$, i.e.\ for
which the following diagram is commutative:
\begin{center}
\begin{tikzcd}
  Y \arrow[d, swap, "g"] & \arrow[l, dashed, swap, "f"] \arrow[dl,
  "\pi"]E  \\
  X &
\end{tikzcd}  
\end{center}
In this setting, a projective space $E$ is called the \emph{absolute}
of a space $X$ if there exists an irreducible perfect map $\pi:E\to X$
and every irreducible perfect map $h:Y\to E$ of a space $Y$ is a
homeomorphism. Here, a continuous onto map $\pi:E\to X$ is
\emph{irreducible} if $\pi(A)\neq X$ for every proper closed subset
$A\subset E$.\medskip

Projective spaces go back to the fundamental work of Gleason
\cite{Gleason1958}. Briefly, it was shown in \cite[Theorem
1.2]{Gleason1958} that each projective space is extremally
disconnected and in \cite[Theorem 2.5]{Gleason1958} that in the
subcategory of compact spaces and continuous maps, the projective
spaces are precisely the extremally disconnected spaces. Moreover, it
was shown in \cite[Theorem 3.2]{Gleason1958} that each compact space
$X$ has an absolute. For compact projective spaces, the interested
reader may also consult Rainwater \cite{Rainwater1959} where a
simplified approach to some of Gleason's results is
presented. Furthermore, it was shown in \cite{Rainwater1959} that a
compact space is extremally disconnected if and only if it is a
retract of the \v{C}ech-Stone compactification of a discrete
space.\medskip

Gleason's results were extended to arbitrary (Hausdorff) spaces, there
is a long list of authors who have contributed in this regard. Here we
will mention only some of the most significant such results. In the
setting of paracompact spaces and perfect maps, the theory of
absolutes and irreducible perfect maps is given with all details in
Ponomarev's paper \cite{Ponomarev1962}. This theory was subsequently
extended to all regular Hausdorff spaces in the papers by Iliadis and
Ponomarev \cite{Iliadis1963a, Ponomarev1963a} and Flachsmeyer
\cite{flachsmeyer:63}. Some of these results were later rediscovered
by Strauss \cite{Strauss1967}. Regarding not necessarily regular
spaces, the following essentially well-known properties easily follow
from Proposition \ref{proposition-min-usco-v10:1} and Corollary
\ref{corollary-min-usco-v10:1}.

\begin{corollary}
  \label{corollary-min-usco-v11:1}
  Each extremally disconnected space $E$ is projective in the category
  of Hausdorff spaces and perfect maps. If, moreover, there exists an
  irreducible perfect map $\pi:E\to X$ onto some space $X$, then $E$
  is the absolute of $X$.
\end{corollary}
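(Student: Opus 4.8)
The plan is to recognise both assertions as selection problems and feed them into Corollary \ref{corollary-min-usco-v10:1}. For projectivity, suppose we are given a perfect onto map $g:Y\to X$ and a perfect map $\pi:E\to X$, and we seek a perfect lift $f:E\to Y$ with $g\circ f=\pi$. I would define a set-valued mapping $\varphi:E\sto Y$ by $\varphi(e)=g^{-1}(\pi(e))$, the $g$-fibre over the point $\pi(e)$. Each value $\varphi(e)$ is nonempty because $g$ is onto, and compact because $g$ is perfect, so $\varphi$ is compact-valued. The point is that $\varphi$ is in fact a perfect usco mapping, and this I would check straight from the characterisations in the Introduction: for closed $F\subset Y$ one has $\varphi^{-1}[F]=\pi^{-1}(g(F))$, which is closed since $g$ is closed and $\pi$ continuous, so $\varphi$ is u.s.c.; for closed $S\subset E$ one has $\varphi[S]=g^{-1}(\pi(S))$, closed since $\pi$ is closed and $g$ continuous; and $\varphi^{-1}(y)=\pi^{-1}(g(y))$ is compact since $\pi$ is perfect.

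With $\varphi$ perfect usco and $E$ extremally disconnected, Corollary \ref{corollary-min-usco-v10:1} supplies a \emph{perfect} selection $f:E\to Y$. Since $f(e)\in\varphi(e)=g^{-1}(\pi(e))$ says exactly $g(f(e))=\pi(e)$, this $f$ is the required perfect lift, and $E$ is projective.

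For the absolute claim we are given an irreducible perfect map $\pi:E\to X$, so by the definition it remains only to show that every irreducible perfect map $h:Y\to E$ is a homeomorphism. Since $h$ is in particular perfect and onto, I would apply the projectivity of $E$ just established to the identity $\mathrm{id}_E:E\to E$, obtaining a perfect section $f:E\to Y$ with $h\circ f=\mathrm{id}_E$. Then $f$ is injective, and $f(E)$ is closed in $Y$ because perfect maps are closed. The decisive step is to rule out $f(E)\neq Y$: were $f(E)$ a proper closed subset of $Y$, irreducibility of $h$ would give $h(f(E))\neq E$, contradicting $h(f(E))=(h\circ f)(E)=E$. Hence $f$ is a continuous bijection whose inverse is the continuous map $h$, so $h$ is a homeomorphism, and $E$ is the absolute of $X$.

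The only genuinely fussy point I anticipate is the bookkeeping for the three identities that show $\varphi$ is perfect usco; once those are in hand, both the lifting and the irreducibility argument are formal.
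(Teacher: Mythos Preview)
Your proof is correct. For projectivity you do exactly what the paper does: form the perfect usco $\varphi=g^{-1}\circ\pi$ and invoke Corollary~\ref{corollary-min-usco-v10:1} for a perfect selection; you simply spell out the verifications $\varphi^{-1}[F]=\pi^{-1}(g(F))$, $\varphi[S]=g^{-1}(\pi(S))$, $\varphi^{-1}(y)=\pi^{-1}(g(y))$ that the paper leaves implicit.

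For the absolute statement the two proofs diverge. The paper observes that irreducibility of $h$ is precisely the statement that $\psi=h^{-1}:E\sto Y$ is a \emph{minimal} perfect mapping, and then appeals directly to Proposition~\ref{proposition-min-usco-v10:1} to conclude that $\psi$ is singleton-valued, hence $h$ is a homeomorphism. Your argument instead reuses the projectivity just proved: lift $\mathrm{id}_E$ through $h$ to a perfect section $f$, then use irreducibility to force $f(E)=Y$. Both are short and standard; the paper's route keeps everything inside the minimal-mapping framework that drives the whole section, while yours is the generic categorical section-plus-irreducibility argument and avoids the (easy but unstated) translation ``$h$ irreducible $\Leftrightarrow$ $h^{-1}$ minimal''.
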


\begin{proof}
  Let $X$ and $Y$ be spaces, and $g:Y\to X$ and $\pi:E\to X$ be
  perfect maps. If $g$ is also onto, then the composition
  $\varphi=g^{-1}\circ \pi:E\sto Y$ is a perfect mapping. Hence, by
  Corollary \ref{corollary-min-usco-v10:1}, $\varphi$ has a perfect
  selection $f:E\to Y$. Accordingly, $E$ is projective. Finally, let
  us assume that $h:Y\to E$ is an irreducible perfect map. Then
  $\psi=h^{-1}:E\sto Y$ is a minimal perfect mapping and, according to
  Proposition~\ref{proposition-min-usco-v10:1}, it is
  singleton-valued. Therefore, $h$ is a homeomorphism.
\end{proof}

For a natural generalisation of Corollary
\ref{corollary-min-usco-v10:1} to arbitrary spaces, the interested
reader is referred to Ul'yanov's paper \cite{Ulyanov1977}. We conclude
this section with the following simple characterisation of extremally
disconnected spaces in terms of selections.

\begin{proposition}
  \label{proposition-min-usco-v7:1}
  For a space $X$, the following are equivalent\textup{:}
  \begin{enumerate}
  \item\label{item:min-usco-v14:1} $X$ is extremally disconnected.
  \item\label{item:min-usco-v14:2} Each usco mapping $\varphi:X\sto Y$
    has a continuous selection.
  \item\label{item:min-usco-v14:3} Each usco mapping
    $\varphi:X\sto\, \{0,1\}$ has a continuous selection.
  \end{enumerate}
\end{proposition}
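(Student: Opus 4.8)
The plan is to establish the cyclic chain of implications \ref{item:min-usco-v14:1} $\Rightarrow$ \ref{item:min-usco-v14:2} $\Rightarrow$ \ref{item:min-usco-v14:3} $\Rightarrow$ \ref{item:min-usco-v14:1}. The first implication is immediate from Corollary \ref{corollary-min-usco-v10:1}, which already gives a continuous selection for every usco mapping defined on an extremally disconnected space (with arbitrary range). The second implication is trivial, since $\{0,1\}$ is itself a (discrete, hence Hausdorff) space, so the selection property in \ref{item:min-usco-v14:2} specialises immediately to the range $Y=\{0,1\}$. Thus the entire content of the proposition is concentrated in the implication \ref{item:min-usco-v14:3} $\Rightarrow$ \ref{item:min-usco-v14:1}, which is the step I expect to require a genuine idea.

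For that implication I would take an arbitrary open set $U\subset X$ and manufacture an usco mapping $\varphi:X\sto\{0,1\}$ whose continuous selections are forced to witness the openness of $\overline{U}$. The natural construction is
\[
  \varphi(x)=
  \begin{cases}
    \{0\} & \text{if } x\in U,\\
    \{1\} & \text{if } x\in X\setminus\overline{U},\\
    \{0,1\} & \text{if } x\in \overline{U}\setminus U.
  \end{cases}
\]
Compact-valuedness is automatic since $\{0,1\}$ is finite, so the only thing to check is upper semi-continuity. Because every subset of the discrete space $\{0,1\}$ is closed, this reduces to verifying that the two nontrivial preimages are closed, and indeed $\varphi^{-1}[\{0\}]=U\cup(\overline{U}\setminus U)=\overline{U}$ and $\varphi^{-1}[\{1\}]=(X\setminus\overline{U})\cup(\overline{U}\setminus U)=X\setminus U$ are both closed in $X$. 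This is precisely where the assignment of values is tailored to match the closedness of $\overline{U}$ and of $X\setminus U$, and it is the one place in the argument where any care is needed.

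Having confirmed that $\varphi$ is usco, I would invoke the hypothesis \ref{item:min-usco-v14:3} to obtain a continuous selection $f:X\to\{0,1\}$. Continuity of $f$ into the discrete two-point space makes $f^{-1}(0)$ clopen. The selection constraints force $f(x)=0$ on $U$ and $f(x)=1$ on $X\setminus\overline{U}$, whence $U\subseteq f^{-1}(0)\subseteq\overline{U}$. Since $f^{-1}(0)$ is closed and contains $U$, it must contain $\overline{U}$, and combining this with the reverse inclusion yields $f^{-1}(0)=\overline{U}$. As $f^{-1}(0)$ is also open, $\overline{U}$ is open, and since $U$ was an arbitrary open set, $X$ is extremally disconnected. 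The main obstacle is thus not the verification but the discovery of the correct mapping $\varphi$; once it is in place, the argument is entirely routine.
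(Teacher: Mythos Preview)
Your proof is correct and follows essentially the same route as the paper: the same cyclic chain of implications, with \ref{item:min-usco-v14:3}$\Rightarrow$\ref{item:min-usco-v14:1} established via the identical mapping $\varphi$ (values $\{0\}$ on $U$, $\{1\}$ on $X\setminus\overline{U}$, and $\{0,1\}$ on the boundary) and the observation that any continuous selection $f$ satisfies $\overline{U}=f^{-1}(0)$. Your version merely spells out the verification of upper semi-continuity and the sandwich $U\subset f^{-1}(0)\subset\overline{U}$ that the paper leaves implicit.
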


\begin{proof}
  The implication
  \ref{item:min-usco-v14:1}$\implies$\ref{item:min-usco-v14:2} is
  Corollary \ref{corollary-min-usco-v10:1}, while
  \ref{item:min-usco-v14:2}$\implies$\ref{item:min-usco-v14:3} is
  trivial. Suppose that \ref{item:min-usco-v14:3} holds, and
  $U\subset X$ is an open set. Next, define an usco mapping
  $\varphi:X\sto\, \{0,1\}$ by $\varphi(x)=0$ if $x\in U$;
  $\varphi(x)=1$ if $x\notin \overline{U}$ and ${\varphi(x)=\{0,1\}}$
  otherwise. Then according to \ref{item:min-usco-v14:3}, $\varphi$
  has a continuous selection $f:X\to \{0,1\}$ and, therefore,
  $\overline{U}=f^{-1}(0)$ is a clopen set.
\end{proof}

\section{Compactifications and Extremal Disconnectedness} 
\label{sec:extens-select-extr}

It is a simple exercise that each dense subset of an extremally
disconnected space is also extremally disconnected, and that each
regular extremally disconnected space is completely regular.\medskip

In this section, we will show that the selection-extension property in
Theorem~\ref{theorem-min-usco-v4:1} follows from properties of the
\v{C}ech-Stone compactification of regular extremally disconnected
spaces. These properties are well known and are summarised below, the
interested reader is referred to 6M of \cite{gillman-jerison:60}.

\begin{proposition}
  \label{proposition-min-usco-v14:1}
  The \v{C}ech-Stone compactification of a regular extremally
  disconnected space is also extremally disconnected, and each compact
  extremally disconnected space is the \v{C}ech-Stone compactification
  of each of its dense subspaces.
\end{proposition}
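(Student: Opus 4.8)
The plan is to deduce both assertions from two standard facts. The first is the elementary observation used throughout the paper, that a space $Z$ is extremally disconnected precisely when $\overline{G}^{\,Z}$ is clopen for every open $G\subseteq Z$; an immediate consequence is that any two disjoint open subsets of an extremally disconnected space have disjoint closures. The second is the classical criterion for recognising the \v{C}ech--Stone compactification (Gillman--Jerison, 6.5): a compactification $K$ of a Tychonoff space $D$ equals $\beta D$ as soon as any two completely separated subsets of $D$ -- in particular, any two disjoint zero-sets -- have disjoint closures in $K$. As noted at the start of the section, a regular extremally disconnected space is completely regular, so $\beta X$ is defined in the first assertion; and a dense subspace $D$ of a compact extremally disconnected space $K$ is Tychonoff, being a subspace of a compact Hausdorff space, so $\beta D$ is defined and $K$ is a compactification of $D$ in the second.

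For the first assertion I would show directly that $\overline{U}^{\,\beta X}$ is clopen for every open $U\subseteq\beta X$. Put $U_0=U\cap X$, an open subset of $X$. Since $X$ is extremally disconnected, $W=\overline{U_0}^{\,X}$ is clopen in $X$, so $\chi_W\colon X\to\{0,1\}$ is continuous and bounded; by the universal property it extends to a continuous $g\colon\beta X\to\mathbb{R}$, whose image lies in $\overline{\{0,1\}}=\{0,1\}$ because $X$ is dense. Hence $\widetilde{W}=g^{-1}(1)$ is clopen in $\beta X$ with $\widetilde{W}\cap X=W$. Now $U_0$ is dense in $W$, $W$ is dense in $\widetilde{W}$, and $U_0\subseteq U$ with $U$ open, so the density argument gives $\overline{U}^{\,\beta X}=\overline{U_0}^{\,\beta X}=\widetilde{W}$. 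Thus $\overline{U}^{\,\beta X}$ is clopen, and in particular open, so $\beta X$ is extremally disconnected.

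For the second assertion I would verify the Gillman--Jerison criterion. Let $A,B\subseteq D$ be completely separated in $D$, say $f(A)=\{0\}$ and $f(B)=\{1\}$ for some continuous $f\colon D\to[0,1]$, and set $U=\{x\in D:f(x)<\tfrac12\}$ and $V=\{x\in D:f(x)>\tfrac12\}$, disjoint open subsets of $D$ with $A\subseteq U$ and $B\subseteq V$. Writing $U=G\cap D$ and $V=G'\cap D$ with $G,G'$ open in $K$, the open set $G\cap G'$ has empty trace on the dense set $D$ and is therefore empty; so $G$ and $G'$ are disjoint open subsets of the extremally disconnected space $K$ and have disjoint closures. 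Since $\overline{U}^{\,K}=\overline{G}^{\,K}$ and $\overline{V}^{\,K}=\overline{G'}^{\,K}$ by density, it follows that $\overline{A}^{\,K}\cap\overline{B}^{\,K}\subseteq\overline{U}^{\,K}\cap\overline{V}^{\,K}=\emptyset$, and the criterion yields $K=\beta D$.

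The only real obstacle is the bookkeeping at the interface between a space and the larger compact space: one has to use repeatedly that every open subset of a dense subspace is the trace of an open subset of the ambient space, that passing to the closure of a dense trace does not change the closure of an open set, and that disjointness of open sets is detected on a dense subspace. Once these density facts are in hand, extremal disconnectedness of the ambient space does all the work -- turning closures of open sets into clopen sets and forcing the disjoint-closure condition -- while the one genuinely external ingredient is the Gillman--Jerison recognition criterion for $\beta D$.
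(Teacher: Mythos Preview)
Your argument is correct. Note, however, that the paper does not actually supply a proof of this proposition: it records the two facts as well known and refers the reader to~6M of Gillman--Jerison. Your write-up is precisely the standard verification one extracts from that reference---extending characteristic functions of clopen sets to $\beta X$ for the first part, and checking the disjoint-closure criterion (Gillman--Jerison~6.5) for the second---so there is nothing to compare beyond saying that you have filled in the details the paper deliberately omitted.
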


Using Proposition \ref{proposition-min-usco-v14:1}, we first give a
very simple proof that Theorem \ref{theorem-min-usco-v4:1} is valid
when the range is an arbitrary completely regular space.

\begin{proposition}
  \label{proposition-min-usco-v16:1}
  Let $X$ be an extremally disconnected regular space, $Y$ be a
  completely regular space and $\varphi:X\sto Y$ be an usco
  mapping. Then for each dense subset $A\subset X$, each continuous
  selection for $\varphi\uhr A$ can be extended to a continuous
  selection for~$\varphi$.
\end{proposition}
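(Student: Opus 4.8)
The plan is to transfer the problem to the Čech--Stone compactifications and then push the given selection back down to $X$. First I would note that $X$, being regular and extremally disconnected, is completely regular, so $\beta X$ is defined, and by Proposition~\ref{proposition-min-usco-v14:1} the space $\beta X$ is compact and extremally disconnected. Since $A$ is dense in $X$, it is dense in $\beta X$; moreover $A$, as a subspace of $X$, is regular, and as a dense subspace of the extremally disconnected space $X$ it is extremally disconnected, hence completely regular. Thus $\beta A$ is defined, and because $A$ is a dense subspace of the compact extremally disconnected space $\beta X$, the second part of Proposition~\ref{proposition-min-usco-v14:1} identifies $\beta X$ with $\beta A$.

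Next I would extend the given selection. Let $g\colon A\to Y$ be a continuous selection for $\varphi\uhr A$, and compose it with the inclusion $Y\hookrightarrow\beta Y$. Since $\beta Y$ is compact and $\beta A=\beta X$, this map extends to a continuous map $\beta X\to\beta Y$; restricting it to $X$ yields a continuous map $\tilde g\colon X\to\beta Y$ with $\tilde g\uhr A=g$. The candidate extension is $\tilde g$ itself, so the whole proof reduces to showing that $\tilde g$ takes its values in the sets $\varphi(x)$.

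The main point --- and the only real obstacle --- is therefore to verify that $\tilde g(x)\in\varphi(x)$ for every $x\in X$; note this simultaneously forces $\tilde g(x)\in Y$, so that the corestriction $f\colon X\to Y$ of $\tilde g$ is a continuous selection for $\varphi$ extending $g$. Fix $x\in X$ and an open set $V\subset Y$ with $\varphi(x)\subset V$. By upper semi-continuity the set $W=\varphi^{\#}[V]$ is an open neighbourhood of $x$, and since $g$ is a selection we have $g(W\cap A)\subset V$. As $A$ is dense, $x\in\overline{W\cap A}$, so continuity of $\tilde g$ gives $\tilde g(x)\in\overline{g(W\cap A)}^{\beta Y}\subset\overline{V}^{\beta Y}$.

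It remains to intersect over $V$. Since $\varphi(x)$ is compact, it is closed in the normal space $\beta Y$, and a routine normality argument shows that for each $z\in\beta Y\setminus\varphi(x)$ there is an open set $O\subset\beta Y$ with $\varphi(x)\subset O$ and $z\notin\overline{O}^{\beta Y}$; taking $V=O\cap Y$ then gives $z\notin\overline{V}^{\beta Y}$. Hence $\bigcap\{\overline{V}^{\beta Y}:V\subset Y\text{ open},\ \varphi(x)\subset V\}=\varphi(x)$, and combining this with the previous paragraph yields $\tilde g(x)\in\varphi(x)$, as required. The crux of the argument is exactly this last step: compactness of the values $\varphi(x)$ together with upper semi-continuity is what prevents the extension $\tilde g$ from drifting off $Y$ into the remainder $\beta Y\setminus Y$.
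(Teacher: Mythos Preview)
Your proof is correct and follows essentially the same route as the paper: pass to $\beta Y$, use $\beta A=\beta X$ (from Proposition~\ref{proposition-min-usco-v14:1}) to extend $g$ to a continuous $\tilde g:X\to\beta Y$, and then verify that $\tilde g$ is actually a selection for $\varphi$. The only difference is in that last verification: where you argue pointwise via neighbourhoods and normality of $\beta Y$, the paper observes in one line that $\Gamma(\varphi)$ is closed in $X\times\beta Y$ (since $\varphi$ remains usco into $\beta Y$) and that $\Gamma(\tilde g)=\overline{\Gamma(g)}\subset\Gamma(\varphi)$.
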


\begin{proof}
  Let $A\subset X$ be dense and $g:A\to Y$ be a continuous selection
  for $\varphi\uhr A$. Since $\varphi$ is compact-valued and u.s.c.,
  it remains so as a set-valued mapping from $X$ to the \v{C}ech-Stone
  compactification $\beta Y$ of $Y$. Thus, ${g:A\to \beta Y}$ is a
  continuous selection for $\varphi\uhr A: A\sto \beta Y$. We can now
  apply Proposition~\ref{proposition-min-usco-v14:1} that
  $\beta A=\beta X$. Accordingly, $g$ can be extended to a continuous
  map ${\beta g:\beta X\to \beta Y}$. Finally, set $f=\beta g\uhr
  X$. Since the graph $\Gamma(\varphi)\subset X\times Y$ of $\varphi$
  is closed in $X\times \beta Y$ and
  $\Gamma(g)\subset \Gamma(\varphi)$, it follows that
  $\Gamma(f)=\overline{\Gamma(g)}\subset \Gamma(\varphi)$. This is
  equivalent to the fact that $f:X\to Y$ is a selection for~$\varphi$.
\end{proof}

Now, we also have the following application of projective spaces
showing that Theorem \ref{theorem-min-usco-v4:1} is valid when the
range is only assumed to be regular.

\begin{corollary}
  \label{corollary-min-usco-v16:1}
  Let $X$ be an extremally disconnected regular space, $Y$ be a
  regular space and $\varphi:X\sto Y$ be an usco mapping. Then for
  each dense subset $A\subset X$, each continuous selection for
  $\varphi\uhr A$ can be extended to a continuous selection
  for~$\varphi$. 
\end{corollary}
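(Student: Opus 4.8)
The plan is to reduce Corollary \ref{corollary-min-usco-v16:1} to the already-established completely regular case in Proposition \ref{proposition-min-usco-v16:1} by replacing the merely regular range $Y$ with its absolute. Concretely, I would invoke the theory of absolutes for regular spaces (Iliadis--Ponomarev, Flachsmeyer) to fix an extremally disconnected regular space $E$ together with a perfect map $\pi:E\to Y$ onto $Y$ (we only need $\pi$ perfect and onto, which the absolute supplies). Since $E$ is regular and extremally disconnected it is completely regular, which is exactly what Proposition \ref{proposition-min-usco-v16:1} demands of a range.

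Next I would transport everything along $\pi$. Because $\pi$ is perfect, the inverse $\pi^{-1}:Y\sto E$ is usco, so the composition $\widetilde{\varphi}=\pi^{-1}\circ\varphi:X\sto E$, given by $\widetilde{\varphi}(x)=\pi^{-1}[\varphi(x)]$, is again usco: it is compact-valued since $\pi$ is perfect and each $\varphi(x)$ is compact, and it is u.s.c.\ because $\widetilde{\varphi}^{-1}[F]=\varphi^{-1}[\pi(F)]$ with $\pi(F)$ closed for closed $F\subset E$; it is nonempty-valued since $\pi$ is onto. The whole point of this construction is that selections descend: if $\widetilde{f}(x)\in\widetilde{\varphi}(x)$, then $\pi(\widetilde{f}(x))\in\varphi(x)$.

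The key step is to lift the given selection. Being dense in the extremally disconnected space $X$, the set $A$ is itself extremally disconnected, so I would apply Corollary \ref{corollary-min-usco-v10:1} to the usco mapping $a\mapsto\pi^{-1}(g(a))$ from $A$ to $E$ and obtain a continuous map $\widetilde{g}:A\to E$ with $\pi\circ\widetilde{g}=g$. Then $\widetilde{g}$ is a continuous selection for $\widetilde{\varphi}\uhr A$, since $\pi(\widetilde{g}(a))=g(a)\in\varphi(a)$ gives $\widetilde{g}(a)\in\pi^{-1}[\varphi(a)]=\widetilde{\varphi}(a)$. Now Proposition \ref{proposition-min-usco-v16:1}, applied with the completely regular range $E$, extends $\widetilde{g}$ to a continuous selection $\widetilde{f}:X\to E$ for $\widetilde{\varphi}$. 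Finally $f=\pi\circ\widetilde{f}:X\to Y$ is continuous, selects $\varphi$ by the descent remark above, and agrees with $g$ on $A$ because $f(a)=\pi(\widetilde{g}(a))=g(a)$, as required.

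I expect the only genuine obstacle to be the appeal to the absolute of a merely regular (possibly non-completely-regular) space: one must be sure that such an $E$ exists, is extremally disconnected and regular, and carries a perfect onto map to $Y$. Once this structural input is secured, the two moves---lifting $g$ via Corollary \ref{corollary-min-usco-v10:1} and extending via Proposition \ref{proposition-min-usco-v16:1}---are routine, and verifying that the compositions with $\pi$ are usco is a direct computation.
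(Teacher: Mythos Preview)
Your proposal is correct and follows essentially the same route as the paper: pass to the absolute $E$ of $Y$ via a perfect onto map $\pi:E\to Y$ (Iliadis--Ponomarev, Flachsmeyer), lift $g$ to a selection $\tilde g$ of $\pi^{-1}\circ g$ by Corollary~\ref{corollary-min-usco-v10:1}, extend $\tilde g$ to a selection of $\pi^{-1}\circ\varphi$ by Proposition~\ref{proposition-min-usco-v16:1} using that $E$ is completely regular, and push back down with $\pi$. Your write-up even supplies a few verifications the paper leaves implicit (that $A$ is extremally disconnected, that the lifted mappings are usco, and that $f\uhr A=g$).
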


\begin{proof}
  Let $A\subset X$ be dense and $g:A\to Y$ be a continuous selection
  for $\varphi\uhr A$. As shown in \cite{Iliadis1963a,
    Ponomarev1963a}, see also \cite{flachsmeyer:63}, there exists an
  extremally disconnected regular space $E$ and a perfect onto map
  $\pi:E\to Y$. Then $\pi^{-1}:Y\sto E$ is usco, hence so is the
  composite mapping $\psi=\pi^{-1}\circ \varphi:X\sto E$. Similarly,
  $\pi^{-1}\circ g:A\sto E$ is also usco and it follows from Corollary
  \ref{corollary-min-usco-v10:1} that it has a continuous selection
  $\tilde{g}:A\to E$. Hence, by Proposition
  \ref{proposition-min-usco-v16:1}, $\tilde{g}$ can be extended to a
  continuous selection $h:X\to E$ for $\psi$ because $E$ is completely
  regular.  Accordingly, $f=\pi\circ h:X\to Y$ is a continuous
  selection for $\varphi$ with $f\uhr A=g$.
\end{proof}

Finally, let us remark that the selection-extension property in
Proposition~\ref{proposition-min-usco-v16:1} is equivalent to extremal
disconnectedness. This is a simple consequence of the following
essentially known properties.

\begin{proposition}
  \label{proposition-min-usco-vgg:1}
  For a regular space $X$, the following are equivalent\textup{:}
  \begin{enumerate}
  \item\label{item:min-usco-vgg:1} $X$ is extremally disconnected.
  \item \label{item:min-usco-vgg-4th:1} $X$ is completely regular and
    $\beta A=\beta X$ for every dense subset $A\subset X$.
  \item\label{item:min-usco-vgg:2} If $A\subset X$ is a dense subset
    and $Y$ is a compact space, then each continuous map $g:A\to Y$
    can be extended to a continuous map $f:X\to Y$.
  \end{enumerate}  
\end{proposition}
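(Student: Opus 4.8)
The plan is to prove the cycle \ref{item:min-usco-vgg:1} $\implies$ \ref{item:min-usco-vgg-4th:1} $\implies$ \ref{item:min-usco-vgg:2} $\implies$ \ref{item:min-usco-vgg:1}, using Proposition \ref{proposition-min-usco-v14:1} as the one substantial external tool and reducing everything else to the universal property of the \v{C}ech--Stone compactification.

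For \ref{item:min-usco-vgg:1} $\implies$ \ref{item:min-usco-vgg-4th:1}, I would first invoke the exercise recalled at the start of this section: a regular extremally disconnected space is completely regular. Hence $\beta X$ exists and, by Proposition \ref{proposition-min-usco-v14:1}, is a compact extremally disconnected space. Given a dense $A\subset X$, the density of $X$ in $\beta X$ shows that $A$ is dense in $\beta X$ as well, so the second assertion of Proposition \ref{proposition-min-usco-v14:1} applies to the compact extremally disconnected space $\beta X$ and yields $\beta A=\beta X$. For \ref{item:min-usco-vgg-4th:1} $\implies$ \ref{item:min-usco-vgg:2}, let $A\subset X$ be dense, $Y$ be compact, and $g:A\to Y$ be continuous. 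Since $A$ is completely regular and $Y$ is compact, $g$ extends to $\beta g:\beta A\to Y$; because $\beta A=\beta X$ and $X$ embeds in $\beta X$, the restriction $f=\beta g\uhr X$ is the desired continuous extension of $g$ to $X$.

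The reverse implication \ref{item:min-usco-vgg:2} $\implies$ \ref{item:min-usco-vgg:1} is where the real idea lies. Given an open set $U\subset X$, I would take $A=U\cup(X\setminus\overline{U})$. This set is dense because $U\subset A$ forces $\overline{A}\supset\overline{U}$, while $X\setminus\overline{U}\subset A$, so $\overline{A}\supset\overline{U}\cup(X\setminus\overline{U})=X$. Moreover $U$ and $X\setminus\overline{U}$ are disjoint open sets, so the map $g:A\to\{0,1\}$ with $g\equiv 0$ on $U$ and $g\equiv 1$ on $X\setminus\overline{U}$ is continuous. Since $\{0,1\}$ is compact, condition \ref{item:min-usco-vgg:2} extends $g$ to a continuous $f:X\to\{0,1\}$. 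Then $f^{-1}(0)$ is clopen; it contains $U$ (where $f=0$) and misses $X\setminus\overline{U}$ (where $f=1$), so $f^{-1}(0)\subset\overline{U}$, while being closed and containing $U$ it also contains $\overline{U}$. Thus $\overline{U}=f^{-1}(0)$ is clopen, and $X$ is extremally disconnected.

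I expect the main obstacle to be precisely this last step: the whole argument hinges on choosing the dense set $A=U\cup(X\setminus\overline{U})$ together with the two-valued test map $g$, which is exactly the construction already used in the proof of Proposition \ref{proposition-min-usco-v7:1}. Once that choice is made, the conclusion is immediate, and the remaining two implications are routine consequences of Proposition \ref{proposition-min-usco-v14:1} and the standard extension property of $\beta(\cdot)$ into compact spaces.
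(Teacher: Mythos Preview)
Your proof is correct and follows essentially the same route as the paper's: the implications \ref{item:min-usco-vgg:1}$\implies$\ref{item:min-usco-vgg-4th:1}$\implies$\ref{item:min-usco-vgg:2} are derived from Proposition~\ref{proposition-min-usco-v14:1} and the universal property of $\beta(\cdot)$, and the key step \ref{item:min-usco-vgg:2}$\implies$\ref{item:min-usco-vgg:1} uses the identical dense set $A=U\cup(X\setminus\overline{U})$ with the two-valued test map into $\{0,1\}$, exactly as in the paper (which attributes this construction to 6M of \cite{gillman-jerison:60}). Your write-up simply supplies more detail than the paper's terse version.
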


\begin{proof}
  The implications
  \ref{item:min-usco-vgg:1}$\implies
  $\ref{item:min-usco-vgg-4th:1}$\implies$\ref{item:min-usco-vgg:2}
  follow from Proposition \ref{proposition-min-usco-v14:1} and the
  property of the \v{C}ech-Stone compactification. Suppose that
  \ref{item:min-usco-vgg:2} holds, and take an open set $U\subset
  X$. Next, as in 6M of \cite{gillman-jerison:60}, let
  $A=U\cup \left(X\setminus \overline{U}\right)$ and $g:A\to \{0,1\}$
  be defined by $g(x)=0$ if $x\in U$ and $g(x)=1$ if
  $x\notin \overline{U}$. Then by \ref{item:min-usco-vgg:2}, $g$ can
  be extended to a continuous map $f:X\to \{0,1\}$. Accordingly,
  $\overline{U}=f^{-1}(0)$ is a clopen set and $X$ is extremally
  disconnected.
\end{proof}

\subsection*{Acknowledgement.} In conclusion, the author would like to
express his sincere gratitude to the referee for several valuable
remarks and suggestions.


\end{document}